\documentclass[letterpaper, 10 pt, journal, twoside]{IEEEtran}  

\usepackage{graphicx} 
\usepackage{amsmath} 
\usepackage{amssymb}  
\usepackage{booktabs}
\usepackage{tensor}
\usepackage{placeins}
\usepackage{accents}
\usepackage{xcolor}
\usepackage{amsthm}
\usepackage{mathtools}
\usepackage{algorithm}
\usepackage{algorithmic}
{
    \newtheorem{assum}{Assumption}
    \newtheorem{lemma}{Lemma}
    \newtheorem{thrm}{Theorem}
    \theoremstyle{remark}
    \newtheorem{rem}{Remark}
}
\usepackage{balance}
\usepackage{cite}

\newcommand{\bma}[1]{\left[\begin{array}{#1}}
\newcommand{\ema}{\end{array}\right]}

\DeclareMathAlphabet{\mbf}{OT1}{ptm}{b}{n}

\newcommand{\InlineIf}[2]{\STATE \textbf{if} #1 \textbf{then} #2}

\def\fdotb{{\raisebox{-0.6ex}{ \kern0.2ex\raisebox{0.8ex}{\tiny $\hspace*{-1ex}\circ$}}}}
\def\fddotb{{\raisebox{-0.6ex}{ \kern0.2ex\raisebox{0.8ex}{\tiny $\hspace*{-1ex}\circ\circ$}}}}

\newcommand{\utimes}{ {\raisebox{-0.6ex}{ \kern-1.0ex\raisebox{0.6ex}{ \small $\mathsf{v}$}}} } %
\newcommand{\beq}{\begin{equation}}
\newcommand{\eeq}{\end{equation}}
\newcommand{\bdis}{\begin{displaymath}}
\newcommand{\edis}{\end{displaymath}}
\newcommand{\beqarray}{\begin{eqnarray}}
\newcommand{\eeqarray}{\end{eqnarray}}
\newcommand{\beqarraynn}{\begin{eqnarray*}}
\newcommand{\eeqarraynn}{\end{eqnarray*}}

\DeclareMathOperator*{\argmax}{arg\,max}

\makeatletter
\newcommand\fs@betterruled{%
  \def\@fs@cfont{\bfseries}\let\@fs@capt\floatc@ruled
  \def\@fs@pre{\vspace*{5pt}\hrule height.8pt depth0pt \kern2pt}%
  \def\@fs@post{\kern2pt\hrule\relax}%
  \def\@fs@mid{\kern2pt\hrule\kern2pt}%
  \let\@fs@iftopcapt\iftrue}
\floatstyle{betterruled}
\restylefloat{algorithm}
\makeatother

\title{\LARGE \bf Generalized Semi-Infinite Programming for Robust Optimal Control with Decision-Dependent Uncertainty}

\author{Jad Wehbeh$^{1}$, Eric C. Kerrigan$^{2}$ and Edoardo Scaccia$^{3}$
\thanks{$^{1}$Jad Wehbeh is with the Department of Electrical and Electronic Engineering, Imperial College London, SW7 2AZ, UK
        {\tt\small j.wehbeh22@imperial.ac.uk}}%
\thanks{$^{2}$Eric C. Kerrigan is with the Department of Electrical and Electronic Engineering and the Department of Aeronautics, Imperial College London,
        SW7 2AZ, UK
        {\tt\small e.kerrigan@imperial.ac.uk}}%
\thanks{$^{3}$Edoardo Scaccia is with the Department of Electrical and Electronic Engineering, Imperial College London, SW7 2AZ, UK
        {\tt\small e.scaccia24@imperial.ac.uk}}
}

\begin{document}

\FloatBarrier

\maketitle
\thispagestyle{empty}
\pagestyle{empty}


\begin{abstract}
   Generalized semi-infinite programs (GSIPs) arise in robust optimal control whenever the admissible uncertainty depends on the state or controls. Existing GSIP methods either impose restrictive structural assumptions or require global optimization that scales poorly to control problems. We present a general framework that reformulates any GSIP with mild regularity as an existence-constrained semi-infinite program, smoothing its disjunctive feasibility condition into differentiable existence constraints over a fixed index superset. The resulting program is solved by established adaptive discretization (cutting-plane) methods using only off-the-shelf nonlinear-programming solvers, and converges under standard assumptions. Treating the state trajectory as part of the uncertainty extends the framework to robust nonlinear optimal control with state-dependent uncertainty. We demonstrate it on a nonconvex benchmark GSIP and a satellite de-tumbling problem with dynamically varying inertia.
\end{abstract}

\begin{IEEEkeywords}
Semi-infinite programming, robust control, state-dependent uncertainty, optimal control, nonlinear systems.
\end{IEEEkeywords}


\section{Introduction}
\subsection{Background and Motivation}

\IEEEPARstart{S}{emi-infinite programs} (SIPs) are optimization problems with a finite-dimensional decision space but an infinite number of constraints~\cite{lopez2007semi}. Each constraint must hold for every value of an index variable ranging over a fixed set, typically an operating condition or uncertainty, as in robust optimization and robust optimal control~\cite{zagorowska2024automatic}. Classically this index set is fixed and independent of the decision variables; here it depends on the operational decisions (directly through the control inputs or indirectly through the system state), so the index represents a decision-dependent uncertainty. Problems with such state- or control-dependent uncertainties include examples from aviation~\cite{chwa2014fuzzy}, robotics~\cite{navvabi2019new}, and electronics~\cite{zhao2013robust}.

SIPs whose index set depends on the decision variables are generalized SIPs (GSIPs)~\cite{stein2002generalized} and are not always tractable by classical SIP methods. Although direct GSIP methods exist~\cite{vazquez2008generalized}, we instead convert GSIPs into existence-constrained SIPs to reuse existing SIP methods such as those of~\cite{wehbeh2024semi}.
 
\subsection{State of the Art}

Explicit methods for state-dependent uncertainty are uncommon and mostly learning-based: robust or stochastic MPC with data-driven models~\cite{soloperto2018learning, bonzanini2021learning}; adaptive sliding-mode control~\cite{li2022adaptive} is a non-learning exception. Of these, only~\cite{soloperto2018learning} guarantees robust constraint satisfaction, and, unlike SIP-based methods, none optimizes directly against the worst case over the state-dependent set.

Direct semi-infinite programming for state-dependent uncertainties is rare. A notable exception~\cite{hu2025global} is limited to polynomial problems with polyhedral index sets. SIP-based robust optimal control is more common but cannot handle the state-dependent uncertainties considered here~\cite{zagorowska2024automatic}. We therefore examine general techniques for GSIPs, which either solve the GSIP directly or first convert it into a classical SIP~\cite{vazquez2008generalized}. The former~\cite{mitsos2015global, kirst2019global} obtain global solutions by discretization and branch-and-bound, respectively; such global methods are necessary for nonconvex constraints to guarantee feasibility~\cite{djelassi2019discretization}, but scale poorly in the index-set dimension. 

We focus on transforming generalized SIPs into standard SIPs, enabling adaptive discretization techniques applied successfully in related settings~\cite{wehbeh2024semi}. Originally described in~\cite{blankenship1976infinitely} and known as a cutting-plane or constraint-generation method, adaptive discretization replaces the infinite constraint set by finitely many points and scales to the high-dimensional problems typical of predictive control~\cite{hettich1993semi}. The related generalized robust cutting-set method of~\cite{isenberg2021generalized}, implemented in the PyROS solver~\cite{sherman2026pyros}, applies the same discretization to nonconvex two-stage problems with recourse, using only off-the-shelf nonlinear-programming solvers; however, it is cast over an uncertainty set fixed a priori and cannot capture the decision-dependence considered here. Unfortunately, converting generalized SIPs into SIPs is not trivial. Existing methods either rely on restrictive assumptions to define the transformations globally, such as degree 2 stability of the inner maximizer in~\cite{polak2005use}, or relax these assumptions to build more limited local transformations~\cite{still1999generalized}. The simpler transformation introduced here converts generalized SIPs to existence-constrained SIPs, tackled either by the adaptive discretization algorithm below or by the global technique of~\cite{djelassi2021global}.

\subsection{Contributions}
This paper expands on~\cite{wehbeh2025state} to provide an efficient solution methodology for GSIPs and robust optimal control problems (ROCPs) with state-dependent uncertainties. Its main contributions are:

\begin{itemize}
    \item A refined transformation of any suitably regular GSIP into an existence-constrained SIP, smoothing its disjunctive feasibility constraints (Theorem~\ref{thrm:gsip_existence_form}).
    \item Its integration into an adaptive discretization algorithm, convergent under standard assumptions with each subproblem solved globally (Theorem~\ref{thrm:alg_conv}).
    \item A framework casting a broad class of nonlinear ROCPs with decision-dependent uncertainty as generalized SIPs, solved by the same machinery (Theorem~\ref{thm:robusttoSIP}).
    \item Validation on a nonconvex GSIP benchmark~\cite{lemonidis2008global} and a satellite de-tumbling problem with inertial uncertainty.
\end{itemize}

\section{Generalized Semi-Infinite Programming}
\label{sec:gsip}
Consider the generalized SIP of the form
\begin{subequations}
\label{eq:gsip_general}
\begin{align}
    \label{eq:gsip_base_obj}
    & \min_{z \in \mathcal{Z}} \; f(z) \\
    \label{eq:gsip_base_constraint}
    \text{s.t.} \quad 
    & g(z, v) \leq 0 \qquad \forall v \in \mathcal{V}(z)
\end{align}
\end{subequations}
where $z \in \mathcal{Z} \subseteq \mathbb{R}^{n_z}$ is the decision variable, $v \in \bar{\mathcal{V}} \subset \mathbb{R}^{n_v}$ a bounded index variable, $f : \mathcal{Z} \rightarrow \mathbb{R}$ the objective, and $g : \mathcal{Z} \times \bar{\mathcal{V}} \rightarrow \mathbb{R}^{n_g}$ the constraints to be satisfied robustly. The decision-dependent index set~$\mathcal{V}(z)$ is defined as
\begin{equation}
    \label{eq:uncertainty_set}
\mathcal{V}(z) \coloneqq 
\left\{\, v \in \bar{\mathcal{V}} \;\middle|\;
\begin{array}{l}
h(z,v) \geq 0 \\
r(z,v) = 0
\end{array}
\right\}\end{equation}
with $h : \mathcal{Z} \times \bar{\mathcal{V}} \rightarrow \mathbb{R}^{n_h}$ and $r : \mathcal{Z} \times \bar{\mathcal{V}} \rightarrow \mathbb{R}^{n_r}$. Throughout, $[n] \coloneqq \{1,\ldots,n\}$ for a positive integer~$n$.

\begin{assum}
    \label{assum:gsip_assums}
    We assume throughout:
    \begin{itemize}
    \item \emph{(a)~Compactness:}~$\mathcal{Z}$ and~$\mathcal{V}(z)$ are non-empty, compact, and uniformly bounded for all $z \in \mathcal{Z}$.
    \item \emph{(b)~Determined part:} there is a partition $v = (p, q)$ with $p \in \mathbb{R}^{n_r}$, $q \in \bar{\mathcal{Q}} \subset \mathbb{R}^{n_v - n_r}$ and~$\bar{\mathcal{Q}}$ compact, such that, with $Q(z) \coloneqq \{q : \exists\, p,\ (p,q) \in \mathcal{V}(z)\}$, the following hold for every $z \in \mathcal{Z}$: (i)~for each $q \in \bar{\mathcal{Q}}$, $r(z, (p, q)) = 0$ has a unique solution~$p$, continuous in~$(z,q)$; (ii)~$Q(z) \subseteq \bar{\mathcal{Q}}$; and (iii)~every candidate $(p,q)$ with $r(z,(p,q))=0$ and $q \in \bar{\mathcal{Q}}$, including infeasible ones where $h \not\geq 0$, lies in $\operatorname{int}(\bar{\mathcal{V}})$, so that~$g$, $h$, and~$r$ are defined on a neighborhood of every point the algorithm evaluates.
    \item \emph{(c)~Smoothness:}~$f$, $g$, $h$, $r \in \mathcal{C}^\rho$, $\rho \in \{0,\ldots,+\infty\}$ ($\rho$-times differentiable for $\rho \geq 1$, continuous for $\rho=0$).
    \end{itemize}
\end{assum}

These parts play distinct roles: (a)~compactness, with continuity, makes each worst-case search attain its maximum; (b)~the determined part reduces the index to~$q$, giving the standard-SIP reformulation; and (c)~smoothness makes the finite subproblems differentiable, hence solvable by off-the-shelf nonlinear-programming solvers.

\begin{lemma}
    \label{lem:gsip_interm}
    Any generalized SIP satisfying Assumption~\ref{assum:gsip_assums}(b) is equivalent to a semi-infinite program over the fixed index set~$\bar{\mathcal{Q}}$, of the form
\begin{subequations}
    \label{eq:gsip_interm}
\begin{align}
    & \qquad \min_{z \in \mathcal{Z}} \; f(z) \\
    \text{s.t.} \quad & \forall q \in \bar{\mathcal{Q}}, \, v = (p,q) \text{ with } r(z,(p,q)) = 0, \notag \\ 
    \label{eq:gsip_interm_constraint}
    & \left[g(z, v) \leq 0\right] \lor
      \Big[ \min_{j\in [n_h]} h_j(z,v) < 0\Big]
\end{align}
\end{subequations}
\end{lemma}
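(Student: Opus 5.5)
The plan is to show that the feasible sets of \eqref{eq:gsip_general} and \eqref{eq:gsip_interm} coincide for every $z \in \mathcal{Z}$. Since the objective $f$ and the outer domain $\mathcal{Z}$ are the same in both problems, this gives equivalence in the strong sense: the same feasible points, the same optimal values and the same minimizers. Fix an arbitrary $z \in \mathcal{Z}$.

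First, I reparametrize the index set. By Assumption~\ref{assum:gsip_assums}(b)(i), for each $q \in \bar{\mathcal{Q}}$ there is a unique $p = p(z,q)$ with $r(z,(p,q)) = 0$. I then claim that
\[
\mathcal{V}(z) = \{(p(z,q),q) : q \in \bar{\mathcal{Q}},\ h(z,(p(z,q),q)) \geq 0\}.
\]
For the inclusion ``$\subseteq$'', take $v=(p,q) \in \mathcal{V}(z)$. Then $q \in Q(z) \subseteq \bar{\mathcal{Q}}$ by (b)(ii). Since $r(z,v)=0$, uniqueness forces $p = p(z,q)$, and $h(z,v)\ge 0$ holds by the definition of $\mathcal{V}(z)$. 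For the inclusion ``$\supseteq$'', take $q \in \bar{\mathcal{Q}}$ and set $v = (p(z,q),q)$ with $h(z,v)\geq 0$. Then $r(z,v)=0$, so we only need $v \in \bar{\mathcal{V}}$. This follows from (b)(iii), which places every such candidate in $\operatorname{int}(\bar{\mathcal{V}}) \subseteq \bar{\mathcal{V}}$.

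The delicate point is that the quantifier in \eqref{eq:gsip_interm_constraint} ranges over all $q \in \bar{\mathcal{Q}}$, including those whose candidate $v$ is not in $\mathcal{V}(z)$. Part (iii) is what guarantees that $g$ and $h$ are even defined at such infeasible candidates, so the disjunction is meaningful for every $q$.

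Second, I verify the logical equivalence. For each $q \in \bar{\mathcal{Q}}$ with candidate $v=(p(z,q),q)$, note that $\min_j h_j(z,v) < 0$ holds exactly when $h(z,v) \not\geq 0$. By the characterization above, this is exactly the statement $v \notin \mathcal{V}(z)$.

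Constraint \eqref{eq:gsip_interm_constraint} for all $q$ therefore reads: for every candidate $v$, either $v \notin \mathcal{V}(z)$ or $g(z,v)\le 0$. This is the implication $v \in \mathcal{V}(z) \Rightarrow g(z,v) \leq 0$ over all candidates. Because the candidates with $h \geq 0$ exhaust $\mathcal{V}(z)$, the implication is equivalent to \eqref{eq:gsip_base_constraint}.

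I do not expect a real obstacle here. The only step needing care is the set identity in the first step, specifically ensuring the parametrization neither misses points of $\mathcal{V}(z)$ nor introduces spurious ones. This is handled by uniqueness in (i) and by (ii). Continuity of $p$ is not needed for the equivalence itself; it matters only for later regularity results.
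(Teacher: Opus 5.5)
Your proposal is correct and follows essentially the same route as the paper: you characterize $\mathcal{V}(z)$ as exactly those candidates $(p,q)$ with $q \in \bar{\mathcal{Q}}$, $r(z,(p,q))=0$ and $h(z,(p,q)) \geq 0$, using uniqueness and the containment in Assumption~\ref{assum:gsip_assums}(b), and then check both directions of the disjunction. Your explicit appeal to part~(iii) to place such candidates in $\bar{\mathcal{V}}$ is a small piece of extra care that the paper folds into its single appeal to Assumption~\ref{assum:gsip_assums}(b).
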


\begin{proof}
    By~\eqref{eq:uncertainty_set} and the containment in Assumption~\ref{assum:gsip_assums}(b), $v = (p,q) \in \mathcal{V}(z)$ if and only if $q \in \bar{\mathcal{Q}}$, $r(z,(p,q)) = 0$, and $h(z,v) \geq 0$, with~$p$ the unique solution of $r(z,(p,q))=0$.

    Let~$z$ satisfy~\eqref{eq:gsip_base_constraint}, and take any $q \in \bar{\mathcal{Q}}$ and~$p$ with $r(z,(p,q)) = 0$. If $h(z,v) \geq 0$ then $v \in \mathcal{V}(z)$, so $g(z,v) \leq 0$ and the first clause holds; otherwise $\min_{j} h_j(z,v) < 0$ and the second holds. Conversely, suppose~$z$ satisfies~\eqref{eq:gsip_interm_constraint}. For any $v \in \mathcal{V}(z)$ we have $h(z,v) \geq 0$, so the second clause fails and $g(z,v) \leq 0$; hence~$z$ satisfies~\eqref{eq:gsip_base_constraint}.
\end{proof}

We treat the strict inequality in~\eqref{eq:gsip_interm_constraint} as non-strict hereafter.
\begin{rem}[Exactness of the relaxation]
\label{rem:nonstrict_cq}
Let $A(z) \coloneqq \{q \in \bar{\mathcal{Q}} : h(z,(p,q)) \geq 0\}$ be the admissible free indices, with~$p$ the unique root of $r(z,(p,q))=0$. The relaxation is exact whenever the strictly admissible indices $\{q : h(z,(p,q)) > 0\}$ are dense in~$A(z)$: the non-strict form enforces $g(z,v) \leq 0$ there, and continuity from Assumption~\ref{assum:gsip_assums}(b)--(c) extends it to~$A(z)$. This can fail for decision-dependent~$h$: a decision may collapse~$A(z)$ onto its boundary, admitting a point with $\min_{j} h_j(z,v) = 0$ but $g_i(z,v) > 0$ for some~$i$; the satellite of Section~\ref{sec:sat_example} is regularized so that the qualification holds.
\end{rem}

We smooth the disjunction of Lemma~\ref{lem:gsip_interm} into a differentiable existence constraint via the conversion of~\cite{kirjner1998conversion} and the logic-smoothing approaches of~\cite{wehbeh2026exactcontinuousreformulationslogic, wehbeh2025smooth}.

\begin{thrm}
\label{thrm:gsip_existence_form}
    Any generalized SIP satisfying Assumption~\ref{assum:gsip_assums}(b) can be written, in the non-strict sense introduced above and exactly so under the qualification of Remark~\ref{rem:nonstrict_cq}, as the existence-constrained SIP of the form
\begin{subequations}
\label{eq:gsip_smooth}
\begin{align}
    & \qquad \qquad \qquad \min_{z \in \mathcal{Z}} \; f(z) \\
    & \; \text{s.t.} \; \forall q \in \bar{\mathcal{Q}}, \,v = (p,q) \text{ with } r(z,(p,q)) = 0, \notag \\
    \label{eq:gsip_smooth_constraint}
    & \quad \forall\, i \in [n_g], \; \exists\, \lambda_i \in \Lambda_{n_h+1} \;:\; \notag \\
    & \qquad \lambda_{i,1}\, g_i(z, v) + \sum_{j = 1}^{n_h} \lambda_{i,j+1}\, h_j(z, v) \leq 0
\end{align}
\end{subequations}
where $\Lambda_{n_h+1} \coloneqq \big\{ \lambda \in \mathbb{R}^{n_h+1} \mid \sum_{j=1}^{n_h+1} \lambda_j = 1,\, \lambda_j \ge 0 \big\}$ is the probability simplex. With~$p$ retained as a variable pinned by $r(z,(p,q))=0$, the objective and constraints are~$\rho$-times differentiable under Assumption~\ref{assum:gsip_assums}(c).
\end{thrm}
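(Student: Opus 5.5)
We start from Lemma~\ref{lem:gsip_interm}, which already reduces the GSIP to a SIP over the fixed index set~$\bar{\mathcal{Q}}$ with the disjunctive constraint~\eqref{eq:gsip_interm_constraint}. Following the convention adopted after the lemma, we replace the strict inequality by $\min_{j} h_j(z,v) \le 0$. By Remark~\ref{rem:nonstrict_cq}, this loses nothing when the strictly admissible indices are dense in~$A(z)$.

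The vector constraint $g(z,v)\le 0$ is the conjunction over $i\in[n_g]$ of $g_i(z,v)\le 0$. Since disjunction distributes over conjunction, the non-strict constraint is equivalent, for each fixed $(z,q)$ and its pinned~$p$, to requiring for every $i\in[n_g]$ that
\[
\min\Big\{ g_i(z,v),\, h_1(z,v),\ldots,h_{n_h}(z,v) \Big\} \le 0 .
\]
This splits the problem into $n_g$ scalar disjunctions. Each one is of the form $\min_{k} a_k \le 0$ with $a = (g_i,h_1,\ldots,h_{n_h}) \in \mathbb{R}^{n_h+1}$.

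The core step is the elementary identity $\min_k a_k = \min_{\lambda\in\Lambda_{n_h+1}} \lambda^\top a$, as in~\cite{kirjner1998conversion}. It holds because a linear function on the simplex attains its minimum at a vertex, and the vertices of $\Lambda_{n_h+1}$ are the unit vectors.
\begin{itemize}
\item For ``$\Rightarrow$'': if $\min_k a_k\le 0$, take $\lambda$ to be the unit vector $e_{k^\star}$ at a minimizing index. Then $\lambda^\top a = a_{k^\star}\le 0$, which gives the existence claim in~\eqref{eq:gsip_smooth_constraint}.
\item For ``$\Leftarrow$'': if some $\lambda\in\Lambda_{n_h+1}$ has $\lambda^\top a\le 0$, then, since $\lambda$ is a convex combination, $\min_k a_k \le \lambda^\top a \le 0$.
\end{itemize}
Applying this pointwise for every $q\in\bar{\mathcal{Q}}$ and every $i\in[n_g]$ shows that the feasible sets of~\eqref{eq:gsip_interm} (non-strict form) and~\eqref{eq:gsip_smooth} coincide. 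The objective is unchanged, so the two programs are equivalent.

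For exactness with respect to the original GSIP, we then invoke Remark~\ref{rem:nonstrict_cq}, which makes the non-strict form coincide with Lemma~\ref{lem:gsip_interm}'s strict form and hence with~\eqref{eq:gsip_general}.

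For the smoothness claim, we keep $p$ as an explicit variable constrained by $r(z,(p,q))=0$, rather than substituting the implicit solution map. That map is only guaranteed continuous by Assumption~\ref{assum:gsip_assums}(b)(i), not differentiable. With $p$ explicit, every function in~\eqref{eq:gsip_smooth} is a composition of $f,g,h,r$ with the bilinear map $(\lambda,a)\mapsto\lambda^\top a$. The constraint $\lambda\in\Lambda_{n_h+1}$ is polyhedral. Assumption~\ref{assum:gsip_assums}(b)(iii) guarantees that all evaluated points lie in $\operatorname{int}(\bar{\mathcal{V}})$, so the derivatives are defined there. Hence the objective and constraints are $\mathcal{C}^\rho$ by Assumption~\ref{assum:gsip_assums}(c).

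The main subtlety is not the simplex identity, which is routine. It is bookkeeping the strict-versus-non-strict issue correctly: the theorem claims equality only in the non-strict sense, and claims exactness only under the qualification. I would state these two levels of equivalence separately so the chain of implications is transparent.
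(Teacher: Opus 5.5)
Your proposal is correct and follows the paper's proof. You distribute the non-strict disjunction into the $n_g$ scalar conditions $\min\{g_i,h_1,\ldots,h_{n_h}\}\le 0$, convert each with the simplex-vertex identity of~\cite{kirjner1998conversion}, and obtain differentiability from linearity in $\lambda_i$ with $p$ kept explicit; your two-direction check of the identity and your use of Assumption~\ref{assum:gsip_assums}(b)(iii) to keep evaluations in $\operatorname{int}(\bar{\mathcal{V}})$ only spell out what the paper leaves implicit.
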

\begin{proof}
    By the non-strict treatment above, the disjunction~\eqref{eq:gsip_interm_constraint} requires, for each $i \in [n_g]$, that $\left[g_i(z,v) \leq 0\right] \lor \left[\min_{j} h_j(z,v) \leq 0\right]$, i.e.\ $\min\{g_i(z,v),\, h_1(z,v),\, \ldots,\, h_{n_h}(z,v)\} \leq 0$. Since a linear function attains its minimum over~$\Lambda_{n_h+1}$ at a vertex, and the vertices are the standard basis vectors, this minimum equals $\min_{\lambda_i \in \Lambda_{n_h+1}} \big( \lambda_{i,1} g_i(z,v) + \sum_{j=1}^{n_h} \lambda_{i,j+1} h_j(z,v) \big)$~\cite{kirjner1998conversion}, so the disjunction is equivalent to~\eqref{eq:gsip_smooth_constraint}. Differentiability follows since~$f$, $g$, $h$, $r$ are~$\mathcal{C}^\rho$ by Assumption~\ref{assum:gsip_assums}(c) and the smoothed constraint is linear in~$\lambda_i$.
\end{proof}

Similar continuous encodings arise, in the homotopy limit, from the logic-smoothing approach of~\cite{malyuta2023fast}. The disjunction~\eqref{eq:gsip_interm_constraint} could instead be handled by a mixed-integer master within the cutting-plane method of~\cite{blankenship1976infinitely}; we smooth it into~\eqref{eq:gsip_smooth} to avoid that combinatorial cost.

\section{Adaptive Discretization for Generalized SIPs}
\label{sec:adaptive_disc}

Problem~\eqref{eq:gsip_smooth} is a semi-infinite program: the index~$q$ ranges over the infinite set~$\bar{\mathcal{Q}}$ and the existence quantifier over the continuous simplex~$\Lambda_{n_h+1}$. We solve it by adaptive discretization~\cite{blankenship1976infinitely}, detailed in Algorithm~\ref{alg:existence_sip}. Problem~\eqref{eq:gsip_smooth} reduces to a standard SIP (Lemma~\ref{lem:bf_reduction}), whose master is exact on any finite working set (Lemma~\ref{lem:master}); feasibility reduces to the sign of a worst-case value (Lemma~\ref{lem:violation}), computed exactly by the separation program~\eqref{eq:q_max} (Lemma~\ref{lem:oracle_exact}); the algorithm converges (Theorem~\ref{thrm:alg_conv}).

\begin{lemma}
    \label{lem:bf_reduction}
    Under Assumption~\ref{assum:gsip_assums}, problem~\eqref{eq:gsip_smooth} is equivalent to the standard semi-infinite program
    \begin{equation*}
        \min_{z \in \mathcal{Z}} f(z) \quad \text{s.t.} \quad G_i(z,q) \leq 0 \quad \forall\, (i,q) \in [n_g] \times \bar{\mathcal{Q}},
    \end{equation*}
    where $G_i(z,q) \coloneqq \min\{g_i, h_1, \ldots, h_{n_h}\}$ at the $v=(p,q)$ solving $r(z,(p,q))=0$; each~$G_i$ is continuous on the compact set $\mathcal{Z} \times \bar{\mathcal{Q}}$.
\end{lemma}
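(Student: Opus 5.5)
The plan is to prove the equivalence pointwise in $(z,q)$, reusing the minimization identity from the proof of Theorem~\ref{thrm:gsip_existence_form}, and then to obtain continuity of each $G_i$ by composing continuous maps.

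\emph{Step 1 (equivalence).} Fix $z \in \mathcal{Z}$ and $q \in \bar{\mathcal{Q}}$. By Assumption~\ref{assum:gsip_assums}(b)(i), the root $p$ of $r(z,(p,q))=0$ is unique, so the phrase ``$v=(p,q)$ with $r(z,(p,q))=0$'' in~\eqref{eq:gsip_smooth} singles out exactly one point. Call it $v(z,q) \coloneqq (p(z,q),q)$. By (b)(iii), this point lies in $\operatorname{int}(\bar{\mathcal{V}})$, so $g$ and $h$ are defined there. For each $i \in [n_g]$, the existence constraint~\eqref{eq:gsip_smooth_constraint} asks whether some $\lambda_i \in \Lambda_{n_h+1}$ makes a linear function of $\lambda_i$ nonpositive. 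That happens if and only if the minimum of this function over the compact simplex is nonpositive. A linear function attains its minimum over the simplex at a vertex, so this minimum is $\min\{g_i,h_1,\dots,h_{n_h}\}$ evaluated at $v(z,q)$, which is $G_i(z,q)$. Hence~\eqref{eq:gsip_smooth_constraint} holds at $(z,q,i)$ if and only if $G_i(z,q)\le 0$. Quantifying over all $(i,q)\in[n_g]\times\bar{\mathcal{Q}}$ shows that the two problems have the same feasible sets in~$z$. They share the objective $f$, so they have the same optimal values and minimizers.

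\emph{Step 2 (continuity).} The map $(z,q)\mapsto p(z,q)$ is continuous by (b)(i), so $(z,q)\mapsto v(z,q)$ is continuous. The functions $g_i$ and $h_j$ are continuous near $v(z,q)$ by (c) with $\rho\ge 0$ and by (b)(iii). Their compositions with $v(\cdot,\cdot)$ are therefore continuous on $\mathcal{Z}\times\bar{\mathcal{Q}}$. The pointwise minimum of finitely many continuous functions is continuous, so each $G_i$ is continuous. Finally, $\mathcal{Z}$ is compact by (a) and $\bar{\mathcal{Q}}$ is compact by (b), so the product $\mathcal{Z}\times\bar{\mathcal{Q}}$ is compact.

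\emph{Main obstacle.} The delicate point is making sure $G_i$ is well defined on all of $\mathcal{Z}\times\bar{\mathcal{Q}}$, not only at admissible indices. This includes infeasible $q$ with $q\notin Q(z)$, where $h\not\ge 0$. This is exactly what (b)(i) and (b)(iii) provide. I would invoke them explicitly so that no case is silently restricted to $Q(z)$. Otherwise the argument is routine, and the relaxation subtlety of Remark~\ref{rem:nonstrict_cq} does not arise here, because it concerns the earlier passage to non-strict inequalities in Theorem~\ref{thrm:gsip_existence_form}.
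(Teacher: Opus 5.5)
Your proposal is correct and follows essentially the same route as the paper's proof. Uniqueness from Assumption~\ref{assum:gsip_assums}(b)(i) reduces the index to $q$, the vertex argument over $\Lambda_{n_h+1}$ turns the existence constraint into $G_i(z,q)\le 0$, and continuity comes from composing the continuous root map with $g_i,h_j$ and taking a finite minimum; your explicit appeals to (b)(iii) for well-definedness at infeasible indices and to (a)--(b) for compactness of $\mathcal{Z}\times\bar{\mathcal{Q}}$ just spell out what the paper leaves implicit.
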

\begin{proof}
    By Assumption~\ref{assum:gsip_assums}(b), $r(z,(p,q)) = 0$ has a unique solution~$p$ for each $q \in \bar{\mathcal{Q}}$, so the index reduces to $q \in \bar{\mathcal{Q}}$. The existence constraint of~\eqref{eq:gsip_smooth} for index~$i$ holds exactly when $\min_{\lambda_i \in \Lambda_{n_h+1}}\big(\lambda_{i,1} g_i + \sum_{j=1}^{n_h} \lambda_{i,j+1} h_j\big) \leq 0$; since a linear function attains its minimum over~$\Lambda_{n_h+1}$ at a vertex, this equals $\min\{g_i, h_1, \ldots, h_{n_h}\} \leq 0$, that is, $G_i(z,q) \leq 0$ at the $v=(p,q)$ solving $r(z,(p,q))=0$. Each~$G_i$ is a minimum of finitely many functions, continuous by~(c) and composed with the continuous solution of~(b), so it is continuous on the compact $\mathcal{Z} \times \bar{\mathcal{Q}}$.
\end{proof}

We retain the existence form~\eqref{eq:gsip_smooth} in the master: the non-smooth~$G_i$ would render the master subproblem nondifferentiable, whereas the simplex multipliers~$\lambda_i$ restore differentiability.

\begin{algorithm}[t]
\begin{algorithmic}[1]
 \caption{\strut Existence-Constrained SIP Solver}
 \label{alg:existence_sip}
 \renewcommand{\algorithmicrequire}{\textbf{Input:}}
 \renewcommand{\algorithmicensure}{\textbf{Output:}}
 \REQUIRE Nonempty finite $\mathbb{Q}_0 \subseteq \bar{\mathcal{Q}}$.
 \ENSURE  $z^*$, $\mathbb{Q}$
 \\ {\color{black}\textit{Initialization} : $\mathbb{Q} \leftarrow \mathbb{Q}_0$}, $\;$ $m \leftarrow \mathrm{card}(\mathbb{Q}_0)$
 \WHILE{\textbf{true}}
    \STATE Solve~\eqref{eq:gsip_lr_nlp} for a minimizer~$z_m^*$
    \STATE $m \leftarrow m + 1$
    \FOR{$i = 1$ to~$n_g$}
        \STATE Solve~\eqref{eq:q_max} to obtain $(q_i^*, p_i^*, s_i^*)$
    \ENDFOR
    \STATE Select $i^* \in \argmax_{i \in [n_g]} s_i^*$ and set $(q_m, s_m) \leftarrow (q_{i^*}^*, s_{i^*}^*)$
    \InlineIf{$s_m \leq 0$}{\textbf{break}}
    \STATE $\mathbb{Q} \leftarrow \mathbb{Q} \cup \{q_m\}$
 \ENDWHILE
 \STATE $z^* \leftarrow z_{m-1}^*$
 \end{algorithmic}
 \end{algorithm}

Fix a finite working set~$\mathbb{Q}$ and restrict the index~$q$ in~\eqref{eq:gsip_smooth} to~$\mathbb{Q}$. With a copy~$p_m$ of the implicit variable per discretization point and a variable $\lambda_{i,m} \in \Lambda_{n_h+1}$ per pair~$(i,m)$, the existence quantifier of~\eqref{eq:gsip_smooth} becomes explicit and the master is the standard nonlinear program
\begin{subequations}
\label{eq:gsip_lr_nlp}
\begin{align}
    & \min_{\substack{z \in \mathcal{Z},\; \{p_m\}_{m \in [\mathrm{card}(\mathbb{Q})]},\\ \{\lambda_{i,m}\}_{i \in [n_g],\, m \in [\mathrm{card}(\mathbb{Q})]}}} \; f(z) \\
    & \text{s.t.} \; \forall m \in [\mathrm{card}(\mathbb{Q})], \; i \in [n_g] : \notag \\
    & \quad v_m = (p_m,q_m), \; \lambda_{i,m} \in \Lambda_{n_h+1}, \; r(z,v_m) = 0, \notag \\
    & \quad \lambda_{i,m,1}\, g_i(z, v_m) + \sum_{j = 1}^{n_h} \lambda_{i,m,j+1}\, h_j(z, v_m) \leq 0,
\end{align}
\end{subequations}
where, under Assumption~\ref{assum:gsip_assums}(c), the objective and constraints of~\eqref{eq:gsip_lr_nlp} are~$\rho$-times differentiable.

\begin{lemma}
\label{lem:master}
For any finite~$\mathbb{Q} \subseteq \bar{\mathcal{Q}}$, a decision~$z$ is feasible for~\eqref{eq:gsip_lr_nlp} if and only if it satisfies the existence constraint of~\eqref{eq:gsip_smooth} at every~$q \in \mathbb{Q}$. Hence~\eqref{eq:gsip_lr_nlp} is a relaxation of~\eqref{eq:gsip_smooth}, exact at the retained points, and its optimal value lower-bounds that of~\eqref{eq:gsip_smooth}.
\end{lemma}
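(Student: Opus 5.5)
The proof is a direct two-way argument about projections, followed by the standard relaxation bound. I read feasibility of~\eqref{eq:gsip_lr_nlp} for a decision~$z$ as the existence of auxiliary variables $\{p_m\}$ and $\{\lambda_{i,m}\}$ that, together with~$z$, satisfy its constraints. In other words, the statement concerns the projection of the feasible set of~\eqref{eq:gsip_lr_nlp} onto~$z$. Since the constraints of~\eqref{eq:gsip_lr_nlp} decouple across the index pairs $(i,m)$, apart from the shared~$z$, both directions can be checked one retained point $q_m \in \mathbb{Q}$ at a time.

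\emph{Forward direction.} Suppose $(z,\{p_m\},\{\lambda_{i,m}\})$ is feasible for~\eqref{eq:gsip_lr_nlp} and fix $q_m \in \mathbb{Q}$. The constraint $r(z,(p_m,q_m))=0$ and the uniqueness in Assumption~\ref{assum:gsip_assums}(b)(i) together force~$p_m$ to be \emph{the} solution~$p$ appearing in~\eqref{eq:gsip_smooth}. So $v_m$ coincides with the~$v$ quantified there. For each $i \in [n_g]$, the vector $\lambda_{i,m} \in \Lambda_{n_h+1}$ then witnesses the existence quantifier of~\eqref{eq:gsip_smooth_constraint} at $q=q_m$.

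\emph{Converse direction.} Suppose~$z$ satisfies the existence constraint at every $q \in \mathbb{Q}$. For each $q_m \in \mathbb{Q}$, set~$p_m$ to the unique root of $r(z,(p,q_m))=0$ given by Assumption~\ref{assum:gsip_assums}(b). For each pair $(i,m)$, choose~$\lambda_{i,m}$ to be a witness from~\eqref{eq:gsip_smooth_constraint}. Because~$\mathbb{Q}$ is finite, only finitely many such choices are needed. The assembled tuple is feasible for~\eqref{eq:gsip_lr_nlp}. If a constructive witness is preferred, Lemma~\ref{lem:bf_reduction} provides one: take the standard basis vector selecting the smallest of $g_i,h_1,\dots,h_{n_h}$, which makes the combination equal to $G_i(z,q_m)\le 0$.

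\emph{Relaxation and bound.} Since $\mathbb{Q}\subseteq\bar{\mathcal{Q}}$, any~$z$ feasible for~\eqref{eq:gsip_smooth} satisfies the existence constraint at every point of~$\mathbb{Q}$. By the equivalence just shown, its $z$-projection is therefore feasible for~\eqref{eq:gsip_lr_nlp}. Both problems share the objective~$f(z)$, which does not depend on~$p$ or~$\lambda$. Hence the feasible set of~\eqref{eq:gsip_lr_nlp}, projected onto~$z$, contains that of~\eqref{eq:gsip_smooth}, and the infimum of~\eqref{eq:gsip_lr_nlp} is at most that of~\eqref{eq:gsip_smooth}. ``Exact at the retained points'' is exactly the if-and-only-if statement. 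Attainment of the minima is not needed for the bound, but it follows from Assumption~\ref{assum:gsip_assums}(a) and continuity if desired.

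\emph{Main obstacle.} There is no real difficulty; the only subtle point is the role of the auxiliary variable~$p_m$. The argument must rely on uniqueness of~$p$ from Assumption~\ref{assum:gsip_assums}(b)(i). Without it, the master could select a different root of~$r$ than the one quantified in~\eqref{eq:gsip_smooth}, and the forward direction would fail.
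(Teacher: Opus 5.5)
Your proof is correct and follows the paper's argument. At each retained point $q_m$ the master constraint is satisfiable exactly when the existence constraint of~\eqref{eq:gsip_smooth} holds, because $\lambda_{i,m}$ ranges over the whole simplex and $p_m$ is pinned by $r(z,v_m)=0$; the lower bound then follows since restricting $\bar{\mathcal{Q}}$ to $\mathbb{Q}$ only enlarges the feasible set. Your explicit appeal to the uniqueness in Assumption~\ref{assum:gsip_assums}(b)(i) for the forward direction just spells out what the paper compresses into the word ``pinned.''
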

\begin{proof}
Each~$\lambda_{i,m}$ ranges over the full simplex~$\Lambda_{n_h+1}$, so the constraint of~\eqref{eq:gsip_lr_nlp} at~$q_m$ is satisfiable exactly when some $\lambda_i \in \Lambda_{n_h+1}$ makes $\lambda_{i,1}\, g_i(z,v_m) + \sum_{j=1}^{n_h} \lambda_{i,j+1}\, h_j(z,v_m) \leq 0$, which is the existence constraint of~\eqref{eq:gsip_smooth} at~$q_m$, with~$p_m$ pinned by $r(z,v_m)=0$. Since~$\mathbb{Q} \subseteq \bar{\mathcal{Q}}$, restricting the quantifier $\forall q \in \bar{\mathcal{Q}}$ to~$\mathbb{Q}$ enlarges the feasible set, so the optimal value cannot exceed that of~\eqref{eq:gsip_smooth}.
\end{proof}

Certifying feasibility then requires the worst-case violation of each constraint over~$\bar{\mathcal{Q}}$.

\begin{lemma}
\label{lem:violation}
Fix~$z$ and $i \in [n_g]$, and let
\begin{equation*}
    V_i(z) \coloneqq \max_{q \in \bar{\mathcal{Q}}} \; \min_{\lambda_i \in \Lambda_{n_h+1}} \Big( \lambda_{i,1}\, g_i(z, v) + \sum_{j = 1}^{n_h} \lambda_{i,j+1}\, h_j(z, v) \Big),
\end{equation*}
with $v = (p,q)$ and $r(z,v) = 0$. Then~$z$ satisfies the existence constraint of~\eqref{eq:gsip_smooth} for index~$i$ at every $q \in \bar{\mathcal{Q}}$ if and only if $V_i(z) \leq 0$.
\end{lemma}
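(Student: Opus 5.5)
The plan is to reduce the inner minimization to the pointwise function~$G_i$ of Lemma~\ref{lem:bf_reduction}, and then use attainment of the outer maximum to turn a statement about all~$q$ into a statement about the single number~$V_i(z)$.

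\textbf{Step 1 (inner minimum).} For each fixed $q \in \bar{\mathcal{Q}}$, Assumption~\ref{assum:gsip_assums}(b) gives a unique $p$ with $r(z,(p,q))=0$, so $v=(p,q)$ is well defined. The inner objective is linear in $\lambda_i$ over the compact simplex~$\Lambda_{n_h+1}$. Its minimum is therefore attained at a vertex, and equals $G_i(z,q)=\min\{g_i,h_1,\ldots,h_{n_h}\}$ evaluated at $v$, as in Lemma~\ref{lem:bf_reduction}. Since the minimum is attained, the existence constraint at $q$ holds if and only if $G_i(z,q)\le 0$.
\begin{itemize}
\item If some $\lambda_i$ makes the combination nonpositive, then the minimum is nonpositive.
\item Conversely, the minimizing vertex supplies a witness~$\lambda_i$.
\end{itemize}

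\textbf{Step 2 (outer maximum).} We have $V_i(z)=\max_{q\in\bar{\mathcal{Q}}} G_i(z,q)$. Lemma~\ref{lem:bf_reduction} shows that $G_i(z,\cdot)$ is continuous on the compact set~$\bar{\mathcal{Q}}$, so by Weierstrass the maximum is attained and $V_i(z)$ is a finite real number. This is the one step needing care: with only a supremum, the equivalence would still hold, but the definition of $V_i$ as a $\max$ must be justified. Continuity rests on the continuity of the implicit root $p(z,q)$ from (b) and of $g_i,h_j$ from (c).

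\textbf{Step 3 (conclusion).} We obtain the chain
\[
\big[\forall q\in\bar{\mathcal{Q}}:\ G_i(z,q)\le 0\big] \iff \max_{q\in\bar{\mathcal{Q}}} G_i(z,q)\le 0 \iff V_i(z)\le 0 .
\]
The forward direction takes $q$ to be the maximizer. The reverse direction uses $G_i(z,q)\le V_i(z)$ for every $q$. Combined with Step 1, this gives the claim.
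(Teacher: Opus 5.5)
Your proof is correct and follows the same route as the paper. At each $q$, the existence constraint holds if and only if the (attained) minimum over $\Lambda_{n_h+1}$ is nonpositive, and this holds for every $q \in \bar{\mathcal{Q}}$ if and only if the maximum over $q$ is nonpositive. Your extra justification that both extrema are attained (the inner one at a simplex vertex, the outer one by continuity of $G_i$ from Lemma~\ref{lem:bf_reduction} on the compact $\bar{\mathcal{Q}}$) is left implicit in the paper's two-line proof, which only establishes attainment of the outer maximum later in Lemma~\ref{lem:oracle_exact}; it is a welcome tightening rather than a different approach.
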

\begin{proof}
At a fixed~$q$, the existence constraint of~\eqref{eq:gsip_smooth} holds for index~$i$ exactly when some $\lambda_i \in \Lambda_{n_h+1}$ makes the bracketed sum nonpositive, that is, when its minimum over~$\Lambda_{n_h+1}$ is nonpositive. This holds at every $q \in \bar{\mathcal{Q}}$ if and only if its maximum over~$q$, namely~$V_i(z)$, is nonpositive.
\end{proof}

The inner maximization is the separation oracle: for each $i \in [n_g]$, the oracle solves the finite program
\begin{subequations}
\label{eq:q_max}
\begin{align}
&(q_i^*(z),p_i^*(z),s_i^*(z)) \in
\argmax_{q \in \bar{\mathcal{Q}},\, p,\, s \in \mathbb{R}} \; s \\
& \text{s.t.} \; v = (p,q), \; r(z,v) = 0, \notag \\
& \hphantom{\text{s.t.} \;} s \le g_i(z,v), \notag \\
& \hphantom{\text{s.t.} \;} s \le h_j(z,v) \quad \forall\, j \in [n_h].
\end{align}
\end{subequations}
Note that no multipliers are needed here: the master must enforce the disjunction $G_i \leq 0$, for which the simplex multipliers of~\eqref{eq:gsip_lr_nlp} restore differentiability, whereas the oracle only maximizes~$G_i$, which the epigraph~\eqref{eq:q_max} captures with smooth constraints.

\begin{lemma}
\label{lem:oracle_exact}
Under Assumption~\ref{assum:gsip_assums}, for every $z \in \mathcal{Z}$ and $i \in [n_g]$, problem~\eqref{eq:q_max} is a finite, $\rho$-times differentiable nonlinear program that attains its maximum, with optimal value $s_i^*(z) = V_i(z)$; every maximizer~$q_i^*(z)$ attains the maximum defining~$V_i(z)$ in Lemma~\ref{lem:violation}.
\end{lemma}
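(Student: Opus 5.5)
The plan is to reduce \eqref{eq:q_max} to the maximization of the continuous function~$G_i(z,\cdot)$ of Lemma~\ref{lem:bf_reduction} over the compact set~$\bar{\mathcal{Q}}$, and then identify that maximum with~$V_i(z)$.

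\textbf{Finiteness and differentiability.} The program has finitely many variables $(q,p,s)$ and $1+n_h+n_r$ constraints, each built from~$g_i$, $h_j$ and~$r$. By Assumption~\ref{assum:gsip_assums}(c) these are $\mathcal{C}^\rho$. The objective~$s$ is linear. Assumption~\ref{assum:gsip_assums}(b)(iii) guarantees that every point with $r(z,(p,q))=0$ and $q\in\bar{\mathcal{Q}}$ lies in $\operatorname{int}(\bar{\mathcal{V}})$. Hence the constraint functions are defined and $\mathcal{C}^\rho$ on a neighborhood of the feasible set, so the program is a $\rho$-times differentiable NLP.

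\textbf{Reduction to the projected problem.} Fix $q\in\bar{\mathcal{Q}}$. By (b)(i), the constraint $r(z,(p,q))=0$ pins $p$ to the unique root $p(z,q)$. The remaining constraints then say $s\le \min\{g_i,h_1,\dots,h_{n_h}\}$ evaluated at $v=(p(z,q),q)$, which is exactly $s\le G_i(z,q)$. So the feasible set is the hypograph of $G_i(z,\cdot)$ over~$\bar{\mathcal{Q}}$. Maximizing~$s$ over it is therefore equivalent to maximizing $G_i(z,q)$ over $q\in\bar{\mathcal{Q}}$, and at any maximizer $s=G_i(z,q)$.

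\textbf{Attainment and identification with $V_i$.} Lemma~\ref{lem:bf_reduction} gives that $G_i(z,\cdot)$ is continuous on the compact set~$\bar{\mathcal{Q}}$. By Weierstrass it attains its maximum at some~$q^*$, and $(q^*,p(z,q^*),G_i(z,q^*))$ is then a maximizer of \eqref{eq:q_max}. By the vertex argument already used in Theorem~\ref{thrm:gsip_existence_form} and Lemma~\ref{lem:bf_reduction}, the inner minimum over the simplex in the definition of~$V_i$ equals $G_i(z,q)$. Hence $V_i(z)=\max_{q\in\bar{\mathcal{Q}}}G_i(z,q)=s_i^*(z)$. Conversely, any maximizer $(q_i^*,p_i^*,s_i^*)$ of \eqref{eq:q_max} has $s_i^*=G_i(z,q_i^*)$: if the inequality were strict, $s$ could be increased while remaining feasible, contradicting optimality. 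Therefore $G_i(z,q_i^*)=V_i(z)$, so $q_i^*$ attains the maximum defining~$V_i$.

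\textbf{Main obstacle.} The only delicate point is the attainment claim. The feasible set in $(q,p,s)$ is unbounded below in~$s$, so compactness cannot be applied to it directly. The argument above avoids this by projecting onto~$q$. It relies on the continuity of the implicit root $p(z,q)$ from (b)(i), not on any implicit-function regularity, which is why (b) is stated with continuity rather than differentiability of~$p$.
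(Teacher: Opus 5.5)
Your proposal is correct and follows essentially the same route as the paper. It pins $p$ via Assumption~\ref{assum:gsip_assums}(b), reads the feasible $s$ at each $q$ as $s \le G_i(z,q)$, and gets attainment from the continuity of $G_i$ on the compact $\bar{\mathcal{Q}}$ (Lemma~\ref{lem:bf_reduction}). It then uses the simplex-vertex argument to identify $\max_{q} G_i(z,q)$ with $V_i(z)$. Beyond the paper, you also spell out that any maximizer must satisfy $s_i^* = G_i(z,q_i^*)$, and that (b)(iii) keeps every evaluated point in the domain of $g$, $h$, $r$; the paper leaves both steps implicit, and making them explicit tightens the same argument.
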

\begin{proof}
By Assumption~\ref{assum:gsip_assums}(b), each $q \in \bar{\mathcal{Q}}$ pins $v = (p,q)$ through $r(z,(p,q)) = 0$, so the feasible~$s$ at that~$q$ are exactly $s \leq G_i(z,q)$ and~\eqref{eq:q_max} maximizes~$G_i(z,\cdot)$ over~$\bar{\mathcal{Q}}$; the maximum is attained since~$G_i$ is continuous on the compact $\mathcal{Z} \times \bar{\mathcal{Q}}$ by Lemma~\ref{lem:bf_reduction}. From the proof of Lemma~\ref{lem:bf_reduction}, $G_i(z,q)$ equals the inner minimum in~$V_i$, hence $\max_{q \in \bar{\mathcal{Q}}} G_i(z,q) = V_i(z)$ and every maximizer of~\eqref{eq:q_max} attains the maximum in Lemma~\ref{lem:violation}. The objective and constraints $r(z,v) = 0$, $s - g_i(z,v) \leq 0$, and $s - h_j(z,v) \leq 0$ are $\rho$-times differentiable by Assumption~\ref{assum:gsip_assums}(c).
\end{proof}

The worst-case discretization point attaining $\max_{i} s_i^*(z)$ is then added to~$\mathbb{Q}$, and the procedure stops once this largest violation is nonpositive.

\begin{thrm}
\label{thrm:alg_conv}
    Suppose that Assumption~\ref{assum:gsip_assums} holds, that~\eqref{eq:gsip_smooth} is feasible, and that every subproblem~\eqref{eq:gsip_lr_nlp} and~\eqref{eq:q_max} is solved to optimality. Algorithm~\ref{alg:existence_sip} then either terminates with a solution of~\eqref{eq:gsip_smooth} or generates a sequence of master solutions whose accumulation points are all optimal for~\eqref{eq:gsip_smooth}.
\end{thrm}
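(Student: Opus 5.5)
The argument follows the classical Blankenship--Falk convergence proof, applied to the standard SIP of Lemma~\ref{lem:bf_reduction}. By that lemma, Lemma~\ref{lem:master} and Lemma~\ref{lem:oracle_exact}, the iteration is exactly a cutting-plane method for
\[
\min_{z\in\mathcal{Z}} f(z) \quad \text{s.t.} \quad G_i(z,q)\le 0 \quad \forall\,(i,q)\in[n_g]\times\bar{\mathcal{Q}},
\]
with $G_i$ continuous on the compact set $\mathcal{Z}\times\bar{\mathcal{Q}}$. Let $F$ denote the feasible set and $f^*$ the optimal value of~\eqref{eq:gsip_smooth}. Since $F$ is closed in the compact $\mathcal{Z}$ (by continuity of $G_i$) and nonempty by hypothesis, $f^*$ is attained.

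\textbf{Finite termination.} Suppose the algorithm stops at iterate $z_{m-1}^*$ with $s_m\le 0$. Lemma~\ref{lem:oracle_exact} gives $\max_i V_i(z_{m-1}^*)=s_m\le 0$, so Lemma~\ref{lem:violation} shows that $z_{m-1}^*$ is feasible for~\eqref{eq:gsip_smooth}. Lemma~\ref{lem:master} gives $f(z_{m-1}^*)\le f^*$, because the master is a relaxation. Hence $z_{m-1}^*$ is optimal.

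\textbf{Infinite case.} Each master attains a minimizer because its feasible set is compact. The $p_m$ are bounded since $\mathcal{V}(z)$ is uniformly bounded, or more directly because they lie in the bounded $\bar{\mathcal{V}}$; the $\lambda$ lie in simplices; and the constraints are continuous. Alternatively, one can project the master onto $z$, where it becomes $\{z: G_i(z,q)\le 0,\ q\in\mathbb{Q}\}$. The working sets are nested, so the optimal values $f(z_m^*)$ are nondecreasing and bounded above by $f^*$.

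Let $\bar z$ be an accumulation point, $z_{m_k}^*\to\bar z$. Then $f(\bar z)\le f^*$ by continuity of $f$, so it suffices to show $\bar z\in F$. Suppose not. Then for some $i$ and $\delta>0$ we have $V_i(\bar z)=2\delta>0$.

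The key step, and the main obstacle, is to exploit the added cuts via uniform continuity. $G=\max_i G_i$ is uniformly continuous on the compact $\mathcal{Z}\times\bar{\mathcal{Q}}$, and $z\mapsto \max_i V_i(z)=\max_{i,q}G_i(z,q)$ is continuous. Hence along the subsequence the violation satisfies $s_{m_k+1}\ge\delta$ for large $k$, where $q_{m_k+1}$ is the point added after $z_{m_k}^*$. This cut is retained in all later masters, so for $l>k$,
\[
G(z_{m_l}^*,q_{m_k+1})\le 0, \qquad G(z_{m_k}^*,q_{m_k+1})\ge\delta .
\]
Here we also use that the selected index $i^*$ attains the maximum of $s$, and that the master enforces all $i$ at each retained $q$, so $\max_i G_i(z_{m_k}^*,q_{m_k+1})=s_{m_k+1}$. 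Subtracting gives $G(z_{m_k}^*,q_{m_k+1})-G(z_{m_l}^*,q_{m_k+1})\ge\delta$, while $\|z_{m_k}^*-z_{m_l}^*\|\to 0$ as $k,l\to\infty$ along the convergent subsequence. This contradicts uniform continuity of $G$ in $z$, uniformly in $q$. Therefore $\bar z\in F$ and $f(\bar z)=f^*$.

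The care needed is in indexing: the algorithm increments $m$ before storing $q_m$, so I must match each master solution with the cut generated from it. I must also ensure that the equivalence between the lifted master~\eqref{eq:gsip_lr_nlp} and the projected $G$-constraints holds exactly. That equivalence is precisely Lemma~\ref{lem:master}, so no further assumption beyond global optimality of the subproblems should be needed.
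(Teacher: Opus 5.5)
Your proposal is correct and follows the same route as the paper: it identifies Algorithm~\ref{alg:existence_sip} as the Blankenship--Falk cutting-plane method on the standard SIP of Lemma~\ref{lem:bf_reduction}, gets finite termination from exact separation (Lemmas~\ref{lem:violation} and~\ref{lem:oracle_exact}) plus the relaxation bound of Lemma~\ref{lem:master}, and handles the infinite case through accumulation points in the compact $\mathcal{Z}$. The only difference is that the paper simply cites Blankenship and Falk for optimality of accumulation points, whereas you reprove that step directly via uniform continuity of $\max_i G_i$ on $\mathcal{Z}\times\bar{\mathcal{Q}}$, with the cut indexing handled correctly.
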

\begin{proof}
By Lemma~\ref{lem:master}, the master~\eqref{eq:gsip_lr_nlp} is a relaxation of~\eqref{eq:gsip_smooth}, exact at the retained points. Separating a violated index is the nontrivial step: the existence quantifier makes each per-point violation a minimization over the simplex, so each worst-case value~$V_i$ is a max-min over the index and the simplex, which Lemmas~\ref{lem:violation} and~\ref{lem:oracle_exact} evaluate exactly. Each iteration appends a point of~$\bar{\mathcal{Q}}$ attaining $\max_i V_i$, so the algorithm is the cutting-plane method of~\cite{blankenship1976infinitely} on the standard SIP of Lemma~\ref{lem:bf_reduction}. If the algorithm terminates, the largest violation is nonpositive, so the master solution is feasible for~\eqref{eq:gsip_smooth}; since the master lower-bounds~\eqref{eq:gsip_smooth}, this feasible optimum is also optimal for~\eqref{eq:gsip_smooth}. Otherwise, since~$\mathcal{Z}$ is compact, the master minimizers~$z_m^*$ have at least one accumulation point in~$\mathcal{Z}$, and the cutting-plane convergence of~\cite{blankenship1976infinitely} renders every such accumulation point optimal for~\eqref{eq:gsip_smooth}.
\end{proof}

\section{State-Dependent Uncertainties in Robust OCPs}
\label{sec:state_dep_uncert}
Treating the state trajectory as part of the uncertainty turns a discrete-time ROCP with state-dependent uncertainties into a generalized SIP of the form treated in Sections~\ref{sec:gsip} and~\ref{sec:adaptive_disc}. Let $x = (x_0,\ldots,x_N) \in \mathcal{X} \subseteq \mathbb{R}^{n_x}$ be the state trajectory, $u = (u_0,\ldots,u_{N-1}) \in \mathcal{U} \subseteq \mathbb{R}^{n_u}$ the control trajectory of inputs, feedback gains, or other decision variables, and $w = (w_0,\ldots,w_{N-1}) \in \mathbb{R}^{n_w}$ the disturbance trajectory, coupled through the discrete-time dynamics
\begin{equation}
    \label{eq:dynamics}
    \phi(u,x,w) = 0,
\end{equation}
a condensed formulation that stacks the per-step dynamics $\phi_k(x_{k+1}, x_k,u_k,w_k)=0$ with the initial condition. Here $\phi : \mathcal{U} \times \mathcal{X} \times \bar{\mathcal{W}} \rightarrow \mathbb{R}^{n_x}$, with~$\bar{\mathcal{W}}$ a compact superset of the admissible disturbances. If $\phi$ instead discretizes an implicit differential-algebraic equation, its algebraic variables are adjoined to the determined part of the uncertainty and their equations appended to~\eqref{eq:dynamics}; the unique-solvability condition below then requires those equations to determine the algebraic variables uniquely and continuously, which index-one structure provides locally.

The admissible disturbances are themselves state- and decision-dependent, constrained as in~\eqref{eq:uncertainty_set} by $h^{\mathrm c}(u,x,w) \geq 0$ and $r^{\mathrm c}(u,x,w) = 0$, where $h^{\mathrm c} : \mathcal{U} \times \mathcal{X} \times \bar{\mathcal{W}} \rightarrow \mathbb{R}^{n_{h^{\mathrm c}}}$ and $r^{\mathrm c} : \mathcal{U} \times \mathcal{X} \times \bar{\mathcal{W}} \rightarrow \mathbb{R}^{n_{r^{\mathrm c}}}$. Because the state and disturbances are coupled through~\eqref{eq:dynamics} while the disturbance set depends on the state, we take $v = (x,w)$ as the uncertainty and let the dynamics enter as an additional equality in the uncertainty set,
\begin{equation*}
\mathcal{V}(u) \coloneqq 
\left\{\, (x,w) \in \mathcal{X} \times \bar{\mathcal{W}} \;\middle|\;
\begin{array}{l}
\phi(u,x,w) = 0 \\
h^{\mathrm c}(u,x,w) \geq 0 \\
r^{\mathrm c}(u,x,w) = 0
\end{array}
\right\}.
\end{equation*}

Our control problem seeks a trajectory~$u$ minimizing the worst-case cost $J : \mathcal{U} \times \mathcal{X} \times \bar{\mathcal{W}} \rightarrow \mathbb{R}$ over the uncertainty~$\mathcal{V}(u)$, subject to constraints $g^{\mathrm c} : \mathcal{U} \times \mathcal{X} \times \bar{\mathcal{W}} \rightarrow \mathbb{R}^{n_{g^{\mathrm c}}}$ (absorbing terminal or boundary requirements, leaving~$\phi$ to the dynamics):
\begin{subequations}
    \label{eq:rocp}
\begin{align}
    & \qquad \min_{u \in \mathcal{U}} \; \max_{(x,w) \in \mathcal{V}(u)} \; J(u,x,w) \\
    \text{s.t.} \quad & g^{\mathrm c}(u,x,w) \leq 0 \qquad \forall (x,w) \in \mathcal{V}(u)
\end{align}
\end{subequations}
\begin{assum}
\label{assum:control_assums}
Mirroring Assumption~\ref{assum:gsip_assums}: \emph{(a)~compactness:} the sets~$\mathcal{X}$, $\mathcal{U}$, and $\Gamma \subset \mathbb{R}$ are non-empty and compact, and~$\mathcal{V}(u)$ is non-empty and compact for every $u \in \mathcal{U}$; \emph{(b)~determined part:} the uncertainty admits a partition $v=(x,p^{\mathrm c},q^{\mathrm c})$ with $q^{\mathrm c} \in \bar{\mathcal{Q}}^{\mathrm c} \subset \mathbb{R}^{n_w-n_{r^{\mathrm c}}}$ compact and $\{q^{\mathrm c} : \exists\, (x,p^{\mathrm c}),\ (x,p^{\mathrm c},q^{\mathrm c}) \in \mathcal{V}(u)\} \subseteq \bar{\mathcal{Q}}^{\mathrm c}$ for all $u \in \mathcal{U}$, such that, for every $u \in \mathcal{U}$ and $q^{\mathrm c} \in \bar{\mathcal{Q}}^{\mathrm c}$, $\phi(u,x,w)=0$ and $r^{\mathrm c}(u,x,w)=0$ with $w=(p^{\mathrm c},q^{\mathrm c})$ admit a unique solution $(x,p^{\mathrm c})$, continuous in $(u,q^{\mathrm c})$, and~$\mathcal{X}$ and the compact superset~$\bar{\mathcal{W}}$ are large enough that every candidate $v=(x,p^{\mathrm c},q^{\mathrm c})$ with $(x,p^{\mathrm c})$ that solution and $q^{\mathrm c} \in \bar{\mathcal{Q}}^{\mathrm c}$, including infeasible ones (where $h^{\mathrm c} \not\geq 0$), lies in $\operatorname{int}(\mathcal{X} \times \bar{\mathcal{W}})$, so that~$\phi$, $r^{\mathrm c}$, $h^{\mathrm c}$, $g^{\mathrm c}$, and~$J$ are defined on a neighborhood of every point the algorithm evaluates; and \emph{(c)~smoothness:}~$J$, $g^{\mathrm c}$, $h^{\mathrm c}$, $r^{\mathrm c}$, $\phi \in \mathcal{C}^\rho$, $\rho \in \{0,\ldots,+\infty\}$. In addition, for every $u \in \mathcal{U}$ the worst-case cost $\max_{(x,w) \in \mathcal{V}(u)} J(u,x,w)$ lies in~$\Gamma$.
\end{assum}

Although~$x$ is treated as uncertain, by the uniqueness in Assumption~\ref{assum:control_assums}(b) the inner maximization in~\eqref{eq:rocp} is the usual worst case over disturbances, each at the state solving the dynamics.

\begin{thrm}
    \label{thm:robusttoSIP}
    Any problem~\eqref{eq:rocp} satisfying Assumption~\ref{assum:control_assums} can be written as a generalized SIP with~$\rho$-times differentiable objective and constraints, then solved to optimality by Algorithm~\ref{alg:existence_sip} under the assumptions of Theorem~\ref{thrm:alg_conv} and, for equivalence with~\eqref{eq:rocp}, the qualification of Remark~\ref{rem:nonstrict_cq}.
\end{thrm}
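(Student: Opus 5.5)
The plan is to reduce the min--max problem~\eqref{eq:rocp} to the form~\eqref{eq:gsip_general} with an epigraph variable, check that the data satisfy Assumption~\ref{assum:gsip_assums}, and then invoke Theorems~\ref{thrm:gsip_existence_form} and~\ref{thrm:alg_conv}. Concretely, set $z \coloneqq (u,\gamma)$ with $\mathcal{Z} \coloneqq \mathcal{U} \times \Gamma$, objective $f(z) \coloneqq \gamma$, and index $v \coloneqq (x,w)$ with $\bar{\mathcal{V}} \coloneqq \mathcal{X} \times \bar{\mathcal{W}}$. The robust constraint is the stacked map $g(z,v) \coloneqq \big(J(u,x,w) - \gamma,\; g^{\mathrm c}(u,x,w)\big)$, so $n_g = 1 + n_{g^{\mathrm c}}$. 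The index set is built from $h(z,v) \coloneqq h^{\mathrm c}(u,x,w)$ and $r(z,v) \coloneqq \big(\phi(u,x,w),\, r^{\mathrm c}(u,x,w)\big)$, which makes $\mathcal{V}(z)=\mathcal{V}(u)$ independent of~$\gamma$.

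The first step is to show that~\eqref{eq:rocp} and this GSIP are equivalent. For fixed~$u$, the constraint $J(u,x,w) \le \gamma$ for all $(x,w) \in \mathcal{V}(u)$ holds if and only if $\gamma \ge \max_{\mathcal{V}(u)} J$. This maximum is attained because $\mathcal{V}(u)$ is compact and $J$ is continuous. Minimizing over~$\gamma$ therefore returns the worst-case cost, and the constraints in~$g^{\mathrm c}$ carry over unchanged.

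The restriction $\gamma \in \Gamma$ loses nothing, since Assumption~\ref{assum:control_assums} places the worst-case cost in~$\Gamma$. There is one subtlety. $\Gamma$ is only assumed compact, so it may not contain every $\gamma$ above the worst case. This does not matter, because the optimal $\gamma$ for each $u$ is exactly $\max_{\mathcal{V}(u)} J$, which lies in~$\Gamma$. Optimal values and optimal $u$ thus coincide.

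The second step, which I expect to be the main bookkeeping obstacle, is verifying Assumption~\ref{assum:gsip_assums} item by item.

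\emph{Item (a).} $\mathcal{Z}$ is compact as a product of compact sets. $\mathcal{V}(z)=\mathcal{V}(u)$ is nonempty and compact, and it is uniformly bounded because it sits inside the compact $\mathcal{X} \times \bar{\mathcal{W}}$.

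\emph{Item (b).} Take the determined part $p \coloneqq (x, p^{\mathrm c})$. Its dimension is $n_x + n_{r^{\mathrm c}}$, which matches the number of equations in~$r$. The free part is $q \coloneqq q^{\mathrm c}$ with $\bar{\mathcal{Q}} \coloneqq \bar{\mathcal{Q}}^{\mathrm c}$. Unique solvability, continuity in $(u,q^{\mathrm c})$, the containment $Q(z) \subseteq \bar{\mathcal{Q}}$, and the interiority of all candidates are then exactly what Assumption~\ref{assum:control_assums}(b) states. Continuity in $z=(u,\gamma)$ is trivial, since $\gamma$ does not enter~$r$. The dimension count must be checked carefully, because the paper writes $\bar{\mathcal{Q}}^{\mathrm c} \subset \mathbb{R}^{n_w-n_{r^{\mathrm c}}}$ while $n_v = n_x+n_w$. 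This gives $n_v - n_r = n_w - n_{r^{\mathrm c}}$, as required.

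\emph{Item (c).} $f$ is linear, and $g,h,r$ are built by stacking $\mathcal{C}^\rho$ maps with a linear term in~$\gamma$, so all are $\mathcal{C}^\rho$.

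Finally, Theorem~\ref{thrm:gsip_existence_form} converts the GSIP into the existence-constrained SIP~\eqref{eq:gsip_smooth}, with $\rho$-times differentiable objective and constraints. This conversion is exact under the qualification of Remark~\ref{rem:nonstrict_cq}, which is precisely why the statement carries that qualification for equivalence with~\eqref{eq:rocp}. Theorem~\ref{thrm:alg_conv} then shows that Algorithm~\ref{alg:existence_sip} either terminates at an optimum or produces accumulation points that are optimal, provided~\eqref{eq:gsip_smooth} is feasible and subproblems are solved globally. These are the assumptions the statement imports. Projecting $z^*=(u^*,\gamma^*)$ onto~$u^*$ gives the robust control.
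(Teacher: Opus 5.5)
Your proposal is correct and follows the paper's proof. It uses the same epigraph variable $\gamma\in\Gamma$ and the same identification $z=(u,\gamma)$, $v=(x,w)$, $g=(g^{\mathrm c}, J-\gamma)$, $h=h^{\mathrm c}$, $r=(\phi,r^{\mathrm c})$, with determined part $p=(x,p^{\mathrm c})$ and free index $q=q^{\mathrm c}$, and then invokes Theorems~\ref{thrm:gsip_existence_form} and~\ref{thrm:alg_conv}; your item-by-item check of Assumption~\ref{assum:gsip_assums}, the dimension count $n_v-n_r=n_w-n_{r^{\mathrm c}}$, and the observation that the minimizing $\gamma$ for each $u$ is the worst-case cost (which lies in $\Gamma$) make explicit steps the paper leaves implicit.
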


\begin{proof}
Introducing $\gamma \in \Gamma$, problem~\eqref{eq:rocp} is equivalent to minimizing~$\gamma$ over $(u,\gamma) \in \mathcal{U} \times \Gamma$ subject to $g^{\mathrm c}(u,x,w) \leq 0$ and $J(u,x,w) - \gamma \leq 0$ for all $(x,w) \in \mathcal{V}(u)$. With $z = (u,\gamma)$, $v = (x,w)$, $\bar{\mathcal{V}} = \mathcal{X} \times \bar{\mathcal{W}}$, and objective $f(z) = \gamma$, this is a generalized SIP of the form~\eqref{eq:gsip_general}--\eqref{eq:uncertainty_set} under the identification $g(z,v) = (g^{\mathrm c}(u,x,w),\, J(u,x,w) - \gamma)$, $h(z,v) = h^{\mathrm c}(u,x,w)$, and $r(z,v) = (\phi(u,x,w),\, r^{\mathrm c}(u,x,w))$, the robust constraint stacking the~$n_{g^{\mathrm c}}$ control constraints with the epigraph bound ($i \in [n_{g^{\mathrm c}}+1]$) and $r=0$ collecting the dynamics~\eqref{eq:dynamics} with~$r^{\mathrm c}=0$. Compactness of \(\mathcal U\) and \(\Gamma\) gives a compact decision set, and the partition \(v=(x,p^{\mathrm c},q^{\mathrm c})\) of Assumption~\ref{assum:control_assums}(b), with \((x,p^{\mathrm c})\) the unique solution of $\phi=0$, $r^{\mathrm c}=0$, supplies the determined part of Assumption~\ref{assum:gsip_assums}(b), with $p=(x,p^{\mathrm c})$ and free index $q=q^{\mathrm c}$. The result then follows from Theorems~\ref{thrm:gsip_existence_form} and~\ref{thrm:alg_conv}.
\end{proof}

Algorithm~\ref{alg:existence_sip} then applies verbatim over a finite $\mathbb{Q}^{\mathrm c} \subseteq \bar{\mathcal{Q}}^{\mathrm c}$, with state and disturbance copies $(x_m,p^{\mathrm c}_m)$ adjoined at each retained~$q^{\mathrm c}_m$ through $\phi=0$, $r^{\mathrm c}=0$.

\section{Numerical Results}

Theorem~\ref{thrm:alg_conv} requires optimal subproblem solutions; as the problems here are nonconvex, we instead use fast local optimizers with multistart.

We implement Algorithm~\ref{alg:existence_sip} in Julia, using JuMP~\cite{Lubin2023} for transcription and Ipopt~\cite{wachter2006implementation} as the local optimizer, on a laptop (Intel Core i7-11370H, 16\,GB RAM). Discretization-point initial guesses are uniformly randomized over their bounds unless stated otherwise.

\subsection{GSIP Example}
\label{sec:gsip_example}
Our first example is Problem 15 of~\cite{lemonidis2008global}, nonlinear and nonconvex in the constraints:
\begin{align*}
    & \min_{z\in[0,2]^2}\quad 
 z_2^2 - 4z_2 \\[4pt]
\text{s.t.}\quad &
z_1\cos v + z_2\sin v - 1 \;\le 0,
\quad \forall v \in \mathcal{V}(z), \\[4pt]
& \mathcal{V}(z) \coloneqq \bigl\{\,v \in [0,\pi] \mid -v^2 - \tfrac{7}{4}z_2 + \tfrac{23}{4} \le 0\,\bigr\}
\end{align*}
with solution $z_1^* = 2$ and $z_2^* = 1.4619$. We choose $\bar{\mathcal{Q}} \coloneqq [0,\pi]$ ($q = v$, no equality constraints), so~$\mathcal{V}(z)$ takes the form~\eqref{eq:uncertainty_set} with $h(z,v) = v^2 + \tfrac{7}{4}z_2 - \tfrac{23}{4}$. Across~$10{,}000$ runs, Algorithm~\ref{alg:existence_sip} converges in 5--6 iterations to tolerance~$10^{-4}$, finding~$z^*$ in $38.96\,$ms on average.
For comparison, the method of~\cite{mitsos2015global} requires $23.46\,$s on the same problem, albeit on older hardware and with additional global optimality guarantees.

\subsection{Satellite De-tumbling}
\label{sec:sat_example}

Our robust control example is the de-tumbling and reorientation of an approximately planar satellite under vibration-induced inertial uncertainty. The vibration model depends only on the angular rates $\omega=(\omega_x,\omega_y,\omega_z)$; $I_{zz}$ is fixed by the perpendicular-axis theorem and off-diagonal inertia is neglected. The uncertainty set is then
\begin{equation*}
    \mathcal{I}(\omega) \coloneqq
    \left\{ (I_{xx},I_{yy},I_{zz}) \in \mathbb{R}^3 \,\middle|\,
    \begin{array}{@{}l@{}}
        \bar{I}_{xx} \le I_{xx} \le \bar{I}_{xx} + \varepsilon + a\omega_x^{2} \\[2pt]
        \bar{I}_{yy} \le I_{yy} \le \bar{I}_{yy} + \varepsilon + a\omega_y^{2} \\[2pt]
        I_{zz} = I_{xx} + I_{yy}
    \end{array}
    \right\}.
\end{equation*}
Here $a\omega^2$ is the vibration-induced, rate-dependent uncertainty and $\varepsilon > 0$ a baseline rate-independent floor, kept below $a\omega^2$ at the operating rates so that state-dependence still governs feasibility. Every admissible interval then has width at least~$\varepsilon$, so the strictly admissible indices are dense in the admissible set (each inertia can be perturbed into the interval interior along the trajectory), Remark~\ref{rem:nonstrict_cq} applies, and the non-strict relaxation is exact. With $\varepsilon = 0$, a control driving $\omega_k \to 0$ collapses the interval to $\{\bar{I}\}$, satisfying the smoothed constraint vacuously and admitting a non-robust control.

At every time step $k \in \{0,\ldots,N\}$, the system state is defined by a unit quaternion~$\mathbf{q}_k$ and an angular velocity~$\omega_k$. These evolve, for $k \in \{0,\ldots,N-1\}$, according to the discrete-time dynamics
\begin{align*}
    \mathbf{q}_{k+1} &= \exp\left((T_s/2)\,\Omega(\omega_k)\right) \mathbf{q}_k \\
    \omega_{k+1} &= \omega_k + T_s\,\mathrm{diag}(I_k)^{-1}\left(\tau_k - \omega_k\times(\mathrm{diag}(I_k)\,\omega_k)\right)
\end{align*}
where~$T_s$ is the discretization time step, $I_k = (I_{{xx}_k},I_{{yy}_k},I_{{zz}_k}) \in \mathcal{I}(\omega_k)$ is the vector of principal moments of inertia, $\tau_k \coloneqq (\tau_{x_k},\tau_{y_k},\tau_{z_k})$ is the control torque at step~$k$, and $\Omega(\omega) \coloneqq \left[\begin{smallmatrix} 0 & -\omega^\top \\ \omega & -[\omega]_\times \end{smallmatrix}\right]$ is the unit quaternion kinematic matrix, with $[\omega]_\times$ the skew-symmetric matrix of~$\omega$~\cite{MarkleyCrassidis2014}. Let $(\mathbf{q}_{k+1},\omega_{k+1}) = F(\mathbf{q}_k,\omega_k,\tau_k,I_k)$ denote these dynamics compactly.

The problem minimizes the sum of squared applied torques while pointing the satellite within an angle~$\Delta\theta_{\max}$ of a reference quaternion~$\mathbf{q}_{\mathrm{ref}}$ by time step~$N$:
\begin{subequations}
\label{eq:sat_example_gsip}
    \begin{align}
        &\qquad \min_{\tau \in \mathcal{T}} \; \sum_{k=0}^{N-1} \tau_k^\top \tau_k \\
    \text{s.t.} \quad & |\operatorname{Re}\left(\mathbf{q}_{\mathrm{ref}}\otimes \mathbf{q}_N^{-1}\right)| \;\ge\; \cos\!\left(\Delta\theta_{\max}/2\right) \notag \\
    & \forall\, I_k \in {\mathcal{I}}(\omega_k), \; k \in \{0,\ldots,N-1\}, \notag \\
    & \quad \text{with } (\mathbf{q}_{k+1},\omega_{k+1}) = F(\mathbf{q}_k,\omega_k,\tau_k,I_k)
    \end{align}
\end{subequations}
where $\operatorname{Re}(\cdot)$ denotes the real part of the quaternion and~$\otimes$ quaternion multiplication. Here $\mathcal{T} \subset \mathbb{R}^{3N}$ is a compact box of admissible torque trajectories $\tau \coloneqq (\tau_0,\ldots,\tau_{N-1})$, large enough that its bounds are inactive at the solution.

We use $N=10$, $T_s=1\,\mathrm{s}$, $a=0.05\,\mathrm{kg\,m^2\,s^2}$, $\varepsilon=0.01\,\mathrm{kg\,m^2}$, $\bar{I}_{xx}=\bar{I}_{yy}=0.1\,\mathrm{kg\,m^2}$, $\mathbf{q}_{\mathrm{ref}}=[1,0,0,0]$, $\Delta\theta_{\max}=15^\circ$, and initial state $\omega_0=[0.5,-0.5,0]\,\mathrm{rad/s}$, $\mathbf{q}_0=[0.760,-0.375,-0.375,0.375]$.

We cast~\eqref{eq:sat_example_gsip} as a generalized SIP via the methodology of Section~\ref{sec:state_dep_uncert} and solve it with Algorithm~\ref{alg:existence_sip}. Here the per-step free part is $q^{\mathrm c} = (I_{xx}, I_{yy})$ and the determined part $(x, I_{zz})$, with per-point copies of the state and~$I_{zz}$ constrained by the dynamics and the perpendicular-axis relation $I_{zz}=I_{xx}+I_{yy}$. The compact~$\bar{\mathcal{Q}}^{\mathrm c}$ collects the inertia bounds at the largest rates attainable over the horizon, finite as~$\mathcal{T}$ and~$\mathcal{X}$ are compact. The algorithm solves for all thirty control decision variables, initialized from a single nominal-inertia point, with multistarted nonconvex subproblems and warm-starting by continuation in~$\varepsilon$; $45$ cutting-plane iterations take $34\,\mathrm{min}$ and attain cost $2.5 \times 10^{-3}$. All $50{,}000$ Monte Carlo runs, including boundary-biased and worst-edge samples, meet the $15^\circ$ bound (mean $5.7^\circ$, maximum $13.3^\circ$); a worst-case search over the admissible inertia nevertheless finds a mixed-inertia sequence with terminal error $19.2^\circ$ that sampling does not reach. Monte Carlo alone thus cannot certify robustness: the bound holds only once the cutting-plane oracle value is nonpositive.

No existing technique addresses this problem at this scale; making the uncertainty set state-independent instead, by bounding the angular-rate components $|\omega_x|,|\omega_y| \le 0.5\,\mathrm{rad/s}$ and enforcing this bound on the controller, renders the problem infeasible in our experiments.

\section{Conclusions}
We presented a transformation that recasts a generalized SIP as an existence-constrained SIP with decision-independent uncertainty, covering nonlinear ROCPs with state-dependent uncertainties as a special case. The reformulation is solved by the same adaptive discretization machinery as a standard SIP, with no solver tailored to the generalized setting.

The satellite study shows the value of modeling this dependence directly: the static, state-independent surrogate is infeasible, whereas the state-dependent formulation meets the bound on every sampled and worst-edge trajectory. Decision-dependence is thus a prerequisite for feasibility, not merely a refinement of the uncertainty model.

The guarantee of Theorem~\ref{thrm:alg_conv} is conditional: it requires Assumption~\ref{assum:gsip_assums}, feasibility of the target problem, and each finite subproblem solved to global optimality, which is not guaranteed for the nonconvex programs here. Robustness is certified only at convergence of the algorithm, once the largest oracle value $\max_i s_i^*(z)$ is nonpositive. Natural next steps are to relax the uniqueness of the determined part, bound the suboptimality of local solves and the number of discretization points, and extend the framework to probabilistic uncertainty and larger problems.
\balance

\section*{ACKNOWLEDGMENTS}

The work of Jad Wehbeh was funded by the Natural Sciences and Engineering Research Council of Canada through a PGS D grant. The work of Edoardo Scaccia was funded by the United Kingdom Engineering and Physical Sciences Research Council and by SLB.



\bibliographystyle{IEEEtran}
\bibliography{IEEEabrv,references.bib}

\begin{thebibliography}{10}
\providecommand{\url}[1]{#1}
\csname url@samestyle\endcsname
\providecommand{\newblock}{\relax}
\providecommand{\bibinfo}[2]{#2}
\providecommand{\BIBentrySTDinterwordspacing}{\spaceskip=0pt\relax}
\providecommand{\BIBentryALTinterwordstretchfactor}{4}
\providecommand{\BIBentryALTinterwordspacing}{\spaceskip=\fontdimen2\font plus
\BIBentryALTinterwordstretchfactor\fontdimen3\font minus \fontdimen4\font\relax}
\providecommand{\BIBforeignlanguage}[2]{{%
\expandafter\ifx\csname l@#1\endcsname\relax
\typeout{** WARNING: IEEEtran.bst: No hyphenation pattern has been}%
\typeout{** loaded for the language `#1'. Using the pattern for}%
\typeout{** the default language instead.}%
\else
\language=\csname l@#1\endcsname
\fi
#2}}
\providecommand{\BIBdecl}{\relax}
\BIBdecl

\bibitem{lopez2007semi}
M.~L{\'o}pez and G.~Still, ``Semi-infinite programming,'' \emph{European journal of operational research}, vol. 180, no.~2, pp. 491--518, 2007.

\bibitem{zagorowska2024automatic}
M.~Zagorowska, P.~Falugi, E.~O'Dwyer, and E.~C. Kerrigan, ``Automatic scenario generation for efficient solution of robust optimal control problems,'' \emph{International Journal of Robust and Nonlinear Control}, vol.~34, no.~2, pp. 1370--1396, 2024.

\bibitem{chwa2014fuzzy}
D.~Chwa, ``Fuzzy adaptive output feedback tracking control of {VTOL} aircraft with uncertain input coupling and input-dependent disturbances,'' \emph{IEEE Transactions on fuzzy systems}, vol.~23, pp. 1505--1518, 2014.

\bibitem{navvabi2019new}
H.~Navvabi and A.~H. Markazi, ``New {AFSMC} method for nonlinear system with state-dependent uncertainty: Application to hexapod robot position control,'' \emph{Journal of Intelligent \& Robotic Systems}, vol.~95, pp. 61--75, 2019.

\bibitem{zhao2013robust}
X.~Zhao, L.~Zhang, P.~Shi, and H.~R. Karimi, ``Robust control of continuous-time systems with state-dependent uncertainties and its application to electronic circuits,'' \emph{IEEE Transactions on Industrial Electronics}, vol.~61, no.~8, pp. 4161--4170, 2013.

\bibitem{stein2002generalized}
O.~Stein and G.~Still, ``On generalized semi-infinite optimization and bilevel optimization,'' \emph{European Journal of Operational Research}, vol. 142, no.~3, pp. 444--462, 2002.

\bibitem{vazquez2008generalized}
F.~G. V{\'a}zquez, J.-J. R{\"u}ckmann, O.~Stein, and G.~Still, ``Generalized semi-infinite programming: a tutorial,'' \emph{Journal of computational and applied mathematics}, vol. 217, no.~2, pp. 394--419, 2008.

\bibitem{wehbeh2024semi}
J.~Wehbeh and E.~C. Kerrigan, ``Semi-infinite programs for robust control and optimization: Efficient solutions and extensions to existence constraints,'' in \emph{8th IFAC Conference on Nonlinear Model Predictive Control NMPC 2024}.\hskip 1em plus 0.5em minus 0.4em\relax IFAC, 2024, pp. 317--322.

\bibitem{soloperto2018learning}
R.~Soloperto, M.~A. M{\"u}ller, S.~Trimpe, and F.~Allg{\"o}wer, ``Learning-based robust model predictive control with state-dependent uncertainty,'' \emph{IFAC-PapersOnLine}, vol.~51, no.~20, pp. 442--447, 2018.

\bibitem{bonzanini2021learning}
A.~D. Bonzanini, D.~B. Graves, and A.~Mesbah, ``Learning-based {SMPC} for reference tracking under state-dependent uncertainty: An application to atmospheric pressure plasma jets for plasma medicine,'' \emph{IEEE Transactions on Control Systems Technology}, vol.~30, no.~2, pp. 611--624, 2021.

\bibitem{li2022adaptive}
P.~Li, D.~Liu, and S.~Baldi, ``Adaptive integral sliding mode control in the presence of state-dependent uncertainty,'' \emph{IEEE/ASME Transactions on Mechatronics}, vol.~27, no.~5, pp. 3885--3895, 2022.

\bibitem{hu2025global}
X.~Hu, J.~Nie, and S.~Zhong, ``A global approach for generalized semi-infinite programs with polyhedral parameter sets,'' \emph{Journal of Optimization Theory and Applications}, vol. 207, no.~3, p.~42, 2025.

\bibitem{mitsos2015global}
A.~Mitsos and A.~Tsoukalas, ``Global optimization of generalized semi-infinite programs via restriction of the right hand side,'' \emph{Journal of Global Optimization}, vol.~61, no.~1, pp. 1--17, 2015.

\bibitem{kirst2019global}
P.~Kirst and O.~Stein, ``Global optimization of generalized semi-infinite programs using disjunctive programming,'' \emph{Journal of Global Optimization}, vol.~73, no.~1, pp. 1--25, 2019.

\bibitem{djelassi2019discretization}
H.~Djelassi, M.~Glass, and A.~Mitsos, ``Discretization-based algorithms for generalized semi-infinite and bilevel programs with coupling equality constraints,'' \emph{Journal of Global Optimization}, vol.~75, no.~2, pp. 341--392, 2019.

\bibitem{blankenship1976infinitely}
J.~W. Blankenship and J.~E. Falk, ``Infinitely constrained optimization problems,'' \emph{Journal of Optimization Theory and Applications}, vol.~19, pp. 261--281, 1976.

\bibitem{hettich1993semi}
R.~Hettich and K.~O. Kortanek, ``Semi-infinite programming: theory, methods, and applications,'' \emph{SIAM Review}, vol.~35, no.~3, pp. 380--429, 1993.

\bibitem{isenberg2021generalized}
N.~M. Isenberg, P.~Akula, J.~C. Eslick, D.~Bhattacharyya, D.~C. Miller, and C.~E. Gounaris, ``A generalized cutting-set approach for nonlinear robust optimization in process systems engineering,'' \emph{AIChE Journal}, vol.~67, no.~5, p. e17175, 2021.

\bibitem{sherman2026pyros}
J.~A.~F. Sherman, N.~M. Isenberg, J.~D. Siirola, and C.~E. Gounaris, ``{PyROS}: The {Pyomo} robust optimization solver,'' Optimization Online, 2026.

\bibitem{polak2005use}
E.~Polak and J.~O. Royset, ``On the use of augmented {L}agrangians in the solution of generalized semi-infinite min-max problems,'' \emph{Computational Optimization and Applications}, vol.~31, no.~2, pp. 173--192, 2005.

\bibitem{still1999generalized}
G.~Still, ``Generalized semi-infinite programming: Theory and methods,'' \emph{European Journal of Operational Research}, vol. 119, no.~2, pp. 301--313, 1999.

\bibitem{djelassi2021global}
H.~Djelassi and A.~Mitsos, ``Global solution of semi-infinite programs with existence constraints,'' \emph{Journal of Optimization Theory and Applications}, vol. 188, pp. 863--881, 2021.

\bibitem{wehbeh2025state}
J.~Wehbeh and E.~C. Kerrigan, ``State-dependent uncertainty modeling in robust optimal control through generalized semi-infinite programming,'' in \emph{2025 33rd Mediterranean Conference on Control and Automation (MED)}.\hskip 1em plus 0.5em minus 0.4em\relax IEEE, 2025, pp. 695--700.

\bibitem{lemonidis2008global}
P.~Lemonidis, ``Global optimization algorithms for semi-infinite and generalized semi-infinite programs,'' Ph.D. dissertation, Massachusetts Institute of Technology, 2008.

\bibitem{kirjner1998conversion}
C.~Kirjner-Neto and E.~Polak, ``On the conversion of optimization problems with max-min constraints to standard optimization problems,'' \emph{SIAM Journal on Optimization}, vol.~8, no.~4, pp. 887--915, 1998.

\bibitem{wehbeh2026exactcontinuousreformulationslogic}
J.~Wehbeh and E.~C. Kerrigan, ``Exact continuous reformulations of logic constraints in nonlinear optimization and optimal control problems,'' arXiv:2601.03906, 2026, submitted to \emph{Automatica}.

\bibitem{wehbeh2025smooth}
------, ``Smooth logic constraints in nonlinear optimization and optimal control problems,'' in \emph{Proceedings of the 64th IEEE Conference on Decision and Control (CDC)}, 2025.

\bibitem{malyuta2023fast}
D.~Malyuta and B.~A{\c{c}}{\i}kme{\c{s}}e, ``Fast homotopy for spacecraft rendezvous trajectory optimization with discrete logic,'' \emph{Journal of Guidance, Control, and Dynamics}, vol.~46, no.~7, pp. 1262--1279, 2023.

\bibitem{Lubin2023}
M.~Lubin, O.~Dowson, J.~{Dias Garcia}, J.~Huchette, B.~Legat, and J.~P. Vielma, ``{JuMP} 1.0: {R}ecent improvements to a modeling language for mathematical optimization,'' \emph{Mathematical Programming Computation}, 2023.

\bibitem{wachter2006implementation}
A.~W{\"a}chter and L.~T. Biegler, ``On the implementation of an interior-point filter line-search algorithm for large-scale nonlinear programming,'' \emph{Mathematical programming}, vol. 106, pp. 25--57, 2006.

\bibitem{MarkleyCrassidis2014}
F.~L. Markley and J.~L. Crassidis, \emph{Fundamentals of Spacecraft Attitude Determination and Control}, ser. Space Technology Library.\hskip 1em plus 0.5em minus 0.4em\relax Springer, 2014, vol.~33.

\end{thebibliography}

\end{document}